\documentclass[12pt,a4paper]{article}

\usepackage{amsmath}
\usepackage{amssymb}
\usepackage{amsfonts}
\usepackage[{left=1.5cm,right=1.5cm, top=2cm, bottom=3cm}]{geometry}

\setcounter{MaxMatrixCols}{10}
\newtheorem{theorem}{Theorem}[section]

\newtheorem{corollary}[theorem]{Corollary}

\newtheorem{lemma}[theorem]{Lemma}

\newenvironment{proof}[1][Proof]{\noindent\textbf{#1.} }{\ \rule{0.5em}{0.5em}}

\newcommand{\Pol}{\mathbb{F}\left[\alpha_1, \alpha_2\right]}
\newcommand{\F}{\mathbb{F}}
\newcommand{\Q}{\mathbb{Q}}
\newcommand{\R}{\mathbb{R}}
\newcommand{\N}{\mathbb{N}}
\newcommand{\A}{\mathbb{A}}
\newcommand{\SO}{SO(2,\mathbb{F})}
\newcommand{\GL}{GL(2,\mathbb{F})}

\begin{document}

\title{An Algebraic Interpretation of the Super Catalan Numbers}
\author{Kevin Limanta\\School of Mathematics and Statistics, UNSW Sydney\\\texttt{k.limanta@unsw.edu.au}
}
\date{}
\maketitle

\begin{abstract}
    We extend the notion of polynomial integration over an arbitrary circle $C$ in the Euclidean geometry over general fields $\F$ of characteristic zero as a normalized $\F$-linear functional on $\Pol$ that takes polynomials that evaluate to zero on $C$ to zero and is $\SO$-invariant. This allows us to not only build a purely algebraic integration theory in an elementary way, but also give the super Catalan numbers
    \begin{align*}
        S(m,n) = \frac{(2m)!(2n)!}{m!n!(m+n)!}
    \end{align*}
    an algebraic interpretation in terms of values of this algebraic integral over some circle applied to the monomials $\alpha_1^{2m}\alpha_2^{2n}$.
\end{abstract}

\section{Introduction}

This is the second of our series of papers building an integration theory of polynomials over unit circles over a general field $\F$. The first paper \cite{Limanta} deals with the case $\F$ is finite of odd characteristic, in which the family of integers $S(m,n)$ called the \textit{super Catalan numbers} and their closely related family of rational numbers $\Omega(m,n)$ called the \textit{circular super Catalan numbers} play a prominent role. They are defined as
\begin{align*}
S(m,n) := \frac{(2m)!(2n)!}{m!n!(m+n)!}, \quad \Omega(m,n) := \frac{S(m,n)}{4^{m+n}} 
\end{align*}
and are indexed by two elements in $\N$ which for us includes $0$.

The super Catalan numbers were first introduced by Catalan \cite{Catalan} in 1874  and the first modern study of these numbers was initiated by Gessel \cite{Gessel} in 1992. They generalized the Catalan numbers $c_n$ since $S(1,n) = 2c_n$. The integrality of $S(m,n)$ can be observed from the relation $4S\left(m,n\right) = S\left(m+1,n\right) + S\left( m,n+1\right)$ which yields the Pascal-like property $\Omega(m,n) = \Omega(m+1,n) + \Omega(m,n+1)$.

No combinatorial interpretation of $S(m,n)$ is known for general $m$ and $n$ to date, in contrast to over $200$ interpretations of the Catalan numbers \cite{Stanley}. However, for $m = 2$, there is some interpretations in terms of cubic trees by Pippenger \cite{Pippenger} and blossom trees by Schaeffer \cite{Schaeffer}, and when $m = 2, 3$, as pairs of Dyck paths with restricted heights by Gessel and Xin \cite{Gessel2}. When $n = m+s$ for $0\leq s\leq3$, Chen and Wang showed that there is an interpretation in terms of restricted lattice paths \cite{Chen}. There is also some weighted interpretation of $S\left(m,n\right)$ as a certain value of Krawtchouk polynomials by the work of Georgiadis, Munemasa, and Tanaka \cite{Georgiadis} and another in terms of positive and negative $2$-Motzkin paths by Allen and Gheorghiciuc \cite{Allen}.

The aim of this paper is twofold. The first one is to build, in a rather elementary way, a polynomial integration theory over circles in the Euclidean geometry over general fields of characteristic zero without recourse to the usual Riemann integral and limiting processes. We shall see that this allows us to give the super Catalan numbers a purely algebraic interpretation, which is our second objective.

Here and throughout, $\F$ is a general field of characteristic zero with multiplicative identity $1_{\F}$ or sometimes just $1$ if the context is clear. We denote by $\Pol$ the algebra of polynomials in $\alpha_1$ and $\alpha_2$ over $\F$ with multiplicative identity $\mathbf{1}$. Our algebraic integral over a circle $C$ is a linear functional $\phi$ on $\Pol$, called a circular integral functional with respect to $C$, which satisfies three conditions: $\phi(\mathbf{1}) = 1_{\F}$ (\textit{Normalization}), $\phi(\pi) = 0$ whenever $\pi$ evaluates to the zero function on $C$ (\textit{Locality}), and $\phi$ is rotationally-invariant (\textit{Invariance}).

When $\F = \R$, there is a well-known formula for the integral of polynomials on the unit sphere $S^{n-1}$ (see \cite{Baker} or \cite{Folland}).

\begin{theorem}\label{Folland formula}
Let $n\geq 2$ and $S^{n-1}$ denote the $\left(n-1\right)$-dimensional unit sphere in $\mathbb{R}^{n}$. If $\mu $ is the usual rotationally invariant measure on $S^{n-1}$, then by writing $b_i = \frac{1}{2}\left(d_i + 1\right)$,
\begin{align*}
    \int_{S^{n-1}} x_{1}^{d_1} x_{2}^{d_2} \ldots x_n^{d_n}\, d\mu = 
    \begin{cases}
        \dfrac{2}{\Gamma\left(b_1 + b_2 + \cdots + b_n \right)}{\displaystyle\prod\limits_{i=1}^{n}} \Gamma\left(b_i\right) &\text{if each } d_i \text{ is even} \\ 
        0 &\text{otherwise.}
    \end{cases}
\end{align*}
\end{theorem}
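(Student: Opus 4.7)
The plan is to invoke the classical trick of computing a Gaussian integral over $\mathbb{R}^{n}$ in two different ways: once by Fubini and once in polar coordinates. This reduces the spherical integral to elementary $\Gamma$-function calculations.

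Before that, I would dispose of the vanishing case. If some $d_i$ is odd, then the reflection $x_i \mapsto -x_i$ preserves $\mu$ (which is in fact $O(n)$-invariant, not merely $SO(n)$-invariant, for $n \geq 2$) but negates the integrand, so the spherical integral equals its own negative and must vanish.

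Assume now that every $d_i$ is even, and write $P(x) = x_{1}^{d_1} x_{2}^{d_2}\cdots x_{n}^{d_n}$. Consider
$$I := \int_{\mathbb{R}^{n}} P(x)\, e^{-|x|^2}\, dx.$$
By Fubini, $I = \prod_{i=1}^{n}\int_{\mathbb{R}} x_i^{d_i} e^{-x_i^{2}}\, dx_i$, and the substitution $u = x_i^{2}$ turns each factor into $\Gamma(b_i)$ with $b_i = \tfrac{1}{2}(d_i+1)$. On the other hand, passing to polar coordinates $x = r\omega$ with $\omega \in S^{n-1}$ and using the homogeneity $P(r\omega) = r^{d_1+\cdots+d_n}P(\omega)$ produces
$$I = \left(\int_{0}^{\infty} r^{d_1+\cdots+d_n+n-1}\, e^{-r^2}\, dr\right)\left(\int_{S^{n-1}} P\, d\mu\right),$$
where the radial factor, again after $u = r^{2}$, equals $\tfrac{1}{2}\Gamma(b_1+\cdots+b_n)$ because $b_1+\cdots+b_n = \tfrac{1}{2}(d_1+\cdots+d_n+n)$. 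Equating the two expressions for $I$ and solving for $\int_{S^{n-1}} P\, d\mu$ yields the stated formula, with the factor of $2$ in the numerator arising from the $\tfrac{1}{2}$ in the radial factor.

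The main point requiring care is the calibration of $\mu$: it must be the unnormalized surface measure (with total mass $2\pi^{n/2}/\Gamma(n/2)$) so that polar coordinates contribute the Jacobian $r^{n-1}\, dr\, d\mu(\omega)$ exactly; this is precisely what produces the constant $2$ in the numerator. Beyond this bookkeeping, the proof is a routine $\Gamma$-function computation with no substantive obstacles.
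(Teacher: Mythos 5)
Your proof is correct. The paper does not prove this theorem itself --- it quotes it from the cited references of Baker and Folland --- and your Gaussian-integral argument (Fubini versus polar coordinates, with the odd case killed by the reflection $x_i \mapsto -x_i$) is precisely the standard proof given in those sources, including the correct calibration of $\mu$ as the unnormalized surface measure that produces the factor of $2$.
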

We may see from Theorem \ref{Folland formula} above, when $n = 2$, $d_1 = 2m$, and $d_2 = 2n$, we obtain
\begin{align*}
    \frac{2\Gamma\left(b_1\right)\Gamma\left(b_2\right)}{\Gamma\left(b_1 + b_2\right)} = \frac{2\Gamma\left(m+\frac{1}{2}\right)\Gamma\left(n+\frac{1}{2}\right)}{\Gamma\left(m+n+1\right)} = 2\pi \frac{(2m)!(2n)!}{4^{m+n}m!n!(m+n)!} = 2\pi \Omega(m,n)
\end{align*}
so if the integral is normalized, we get just the circular super Catalan numbers.

In \cite{Limanta}, we showed that the polynomial integration theory over finite field of odd characteristics is analogous to the $\F = \R$ case, which we summarize below.
\begin{theorem}\label{Thm formula blue CIF SC} Let $p>2$ be a prime and $q = p^r$ for some $r \in \N$. In the Euclidean geometry over $\mathbb{F}_{q}$ with multiplicative identity $1_q$, the unit circle is $S^1 = \left\{\left[x_1,x_2\right] \in \F_q^2 \colon x^2 + y^2 = 1_q\right\}$. Let $k$ and $l$ be any natural numbers for which $0 \leq k+l < q-1$. Then the functional $\psi_{b,q}$ on $\F_q\left[\alpha_1, \alpha_2\right]$ given by
\[
\psi_{b,q}\left(\alpha_1^{k}\alpha_2^{l}\right) = -\left(\frac{-1}{p}\right)^r \sum_{\left[x,y\right] \in S^1} x^k y^l =
\begin{cases}
\Omega\left(m,n\right) \mathrm{mod}\, p &\mbox{if } k=2m \mbox{ and } l=2n \\
0 & \mbox{otherwise}
\end{cases}
\]
is the unique circular integral functional with respect to $S^1$. Here $\left(\frac{-1}{p}\right)$ is the usual Legendre symbol.
\end{theorem}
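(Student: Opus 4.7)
The plan has three parts: verify the axioms for $\psi_{b,q}$, establish uniqueness, and compute the explicit monomial values. Locality is immediate since $\pi \equiv 0$ on $S^1$ makes every summand of $\sum_{[x,y]\in S^1}\pi(x,y)$ vanish. For Invariance, the Brahmagupta--Fibonacci identity $(a^2+b^2)(x^2+y^2) = (ax-by)^2+(bx+ay)^2$ specialized to $a^2+b^2=1$ shows that every $R \in SO(2, \F_q)$ permutes $S^1$, leaving the sum unchanged after reindexing. For Normalization, the classical count $|S^1| = q - \chi_q(-1)$ in $\mathbb{Z}$, where $\chi_q$ is the quadratic character of $\F_q$, together with the standard identity $\chi_q(-1) = \left(\frac{-1}{p}\right)^r$, gives $|S^1| \equiv -\left(\frac{-1}{p}\right)^r$ in $\F_q$, so $\psi_{b,q}(\mathbf{1}) = 1_q$.

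For uniqueness, Locality forces any circular integral functional $\phi$ to factor through the evaluation map $\F_q[\alpha_1, \alpha_2] \to \F_q^{S^1}$, so $\phi(\pi) = \sum_{P \in S^1} w_P\, \pi(P)$ for some weights $w_P \in \F_q$. The action of $SO(2, \F_q)$ on $S^1$ is transitive: the matrix with first row $(x_0 x_1 + y_0 y_1,\; y_0 x_1 - x_0 y_1)$ lies in $SO(2, \F_q)$ by Brahmagupta--Fibonacci and sends $(x_0, y_0)$ to $(x_1, y_1)$. Invariance then forces the $w_P$ to be constant on $S^1$, and Normalization fixes the common value, yielding $\phi = \psi_{b,q}$.

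For the explicit formula, parity disposes of the cases with an odd exponent: the involution $(x, y) \mapsto (-x, y)$ on $S^1$ negates $x^k y^l$ when $k$ is odd, so the sum vanishes, and the $l$-odd case is symmetric. For $k = 2m$ and $l = 2n$, I would parametrize $S^1$ by a cyclic group via the map $(x, y) \mapsto z := x + iy$. When $\left(\frac{-1}{p}\right)^r = 1$, $i \in \F_q$ and this identifies $S^1$ with $\F_q^*$; when $\left(\frac{-1}{p}\right)^r = -1$, $i \in \F_{q^2}$ and the same map identifies $S^1$ with the order-$(q+1)$ subgroup $\{z \in \F_{q^2}^* : z^{q+1} = 1\}$. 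In either case, $x = (z + z^*)/2$ and $y = (z - z^*)/(2i)$, with $z^* = z^{-1}$ or $z^q$ respectively. Substituting and expanding binomially reduces $\sum_{S^1} x^{2m} y^{2n}$ to a double sum of character sums $\sum_z z^e$ on the corresponding cyclic group, and the hypothesis $k + l < q - 1$ ensures these vanish for every $(k, j)$ except the single pair with $k + j = m + n$. Tracking signs (the character sum contributes $q-1 \equiv -1$ in the first case and $q+1 \equiv 1$ in the second, each balancing with the factor $-\left(\frac{-1}{p}\right)^r$ in front) merges both cases into the unified formula
\begin{align*}
\psi_{b,q}(\alpha_1^{2m}\alpha_2^{2n}) = \frac{(-1)^n}{4^{m+n}}\,[x^{m+n}]\,(1+x)^{2m}(1-x)^{2n}.
\end{align*}

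The main obstacle is then recognizing the right-hand side as $\Omega(m, n) \bmod p$, equivalently the integer identity $(-1)^n [x^{m+n}](1+x)^{2m}(1-x)^{2n} = S(m, n)$. I would establish this by running the same binomial expansion over $\R$: setting $u = e^{i\theta}$ in the Laurent polynomial $(u + u^{-1})^{2m}(u - u^{-1})^{2n}$ gives $4^{m+n}(-1)^n \cos^{2m}\theta \sin^{2n}\theta$, and integrating against $\frac{d\theta}{2\pi}$ picks out the constant term on the left, which equals $[x^{m+n}](1+x)^{2m}(1-x)^{2n}$ via $v = u^2$, while evaluating to $4^{m+n}(-1)^n \Omega(m, n) = (-1)^n S(m, n)$ on the right by Theorem \ref{Folland formula}. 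Both sides of the identity are integers, so it persists in $\F_q$, and dividing by $4^{m+n}$ (invertible since $p$ is odd) yields $\psi_{b,q}(\alpha_1^{2m}\alpha_2^{2n}) = \Omega(m, n) \bmod p$, completing the proof.
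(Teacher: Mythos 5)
This theorem is quoted in the paper as a summary of the companion work \cite{Limanta} and is not proved here, so there is no internal proof to compare against; judged on its own terms, your argument is correct and essentially complete. All the key points check out: Locality and Invariance of $\psi_{b,q}$ are immediate from the sum-over-$S^1$ form; the count $\left\vert S^1\right\vert = q - \chi_q(-1)$ with $\chi_q(-1) = \left(\frac{-1}{p}\right)^r$ gives Normalization; uniqueness correctly combines surjectivity of evaluation onto $\mathbb{F}_q^{S^1}$ (so Locality forces $\phi(\pi)=\sum_P w_P\,\pi(P)$) with transitivity of $SO(2,\mathbb{F}_q)$ on $S^1$; and the identification of $S^1$ with the cyclic group of order $q\mp1$ via $z=x+iy$ reduces the monomial values to a constant-term extraction, with the hypothesis $k+l<q-1$ used exactly where it is needed to kill all nonzero-exponent character sums. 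The final constant-term identity $(-1)^n[x^{m+n}](1+x)^{2m}(1-x)^{2n}=S(m,n)$ is von Szily's identity, and your derivation of it from Theorem \ref{Folland formula} via $u=e^{i\theta}$ is a legitimate way to import it into $\mathbb{F}_q$ since both sides are integers. It is worth noting that this route is quite different in spirit from the method the present paper uses for the characteristic-zero analogue (Theorem \ref{Existence and Uniqueness Theorem Intro}): there one cannot count points or sum over $S^1$, so the paper instead proves Locality through membership in $\left\langle\alpha_1^2+\alpha_2^2-1\right\rangle$ and the Pascal-like recurrence for $\Omega(m,n)$, proves Invariance via the rational parametrization $h_u$ of the rotation group, and derives uniqueness from a polynomial identity in the parameter $u$. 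Your counting/character-sum argument is cleaner where it applies but is intrinsically tied to finiteness of $\mathbb{F}_q$; the paper's approach is what survives the passage to infinite fields. Two cosmetic points: the sign in your transitivity matrix should be adjusted to match the paper's row-vector convention for the right action of $h$ on $\A$, and you should say explicitly that surjectivity of $\varepsilon$ onto $\mathbb{F}_q^{S^1}$ follows from interpolation on the finite point set $S^1$.
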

Now we present our main result. For $a \in \Q$, $a1_{\F}$ is the embedding of $a$ in $\F$. The unit circle $S^1$ in this setting will be defined in the next section.
\begin{theorem}\label{Existence and Uniqueness Theorem Intro}
For any $k, l \in \N$, the linear functional $\psi$ on $\Pol$ defined by
\begin{equation}
\psi\left(\alpha_1^k \alpha_2^l\right) =\left\{ 
\begin{array}{cl}
\Omega\left( m,n\right)1_{\F} & \text{if } k=2m \text{ and } l=2n\text{,} \\ 
0 & \text{otherwise}
\end{array}%
\right. \label{CIF blue geometry}
\end{equation}
is the unique circular integral functional with respect to $S^1$.
\end{theorem}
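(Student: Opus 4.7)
The plan is to prove existence and uniqueness separately. For existence, I verify that the explicit functional $\psi$ in (\ref{CIF blue geometry}) satisfies all three axioms; for uniqueness, I show that those axioms force any circular integral functional $\phi$ to agree with $\psi$ on every monomial.

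For existence, normalization is immediate from $\Omega(0,0) = 1$. Locality reduces, after invoking that the vanishing ideal of $S^1$ in $\Pol$ is generated by $\alpha_1^2 + \alpha_2^2 - \mathbf{1}$ (a standard fact that I would confirm from the definition of $S^1$ in the next section), to checking
\begin{equation*}
\psi\bigl((\alpha_1^2 + \alpha_2^2 - \mathbf{1})\alpha_1^k \alpha_2^l\bigr) = 0 \quad \text{for all } k, l \in \N.
\end{equation*}
If either $k$ or $l$ is odd, each of the three resulting monomials has an odd exponent and is annihilated by $\psi$. If $k = 2m$ and $l = 2n$, the expression equals $\bigl(\Omega(m+1,n) + \Omega(m,n+1) - \Omega(m,n)\bigr)1_\F$, which vanishes by the Pascal-like recurrence $\Omega(m,n) = \Omega(m+1,n) + \Omega(m,n+1)$ noted in the introduction. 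For invariance, I would expand $(c\alpha_1 - s\alpha_2)^{2m}(s\alpha_1 + c\alpha_2)^{2n}$ via the double binomial theorem, apply $\psi$ to keep only terms whose $\alpha_1$- and $\alpha_2$-exponents are both even, and show the resulting polynomial in $c,s$ equals $\Omega(m,n)1_\F$ modulo $c^2 + s^2 - 1$. Verifying this combinatorial identity is the chief computational step of existence.

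For uniqueness, let $\phi$ be any circular integral functional. The rotation $(c,s)=(-1,0)$ gives $\phi(\alpha_1^k\alpha_2^l) = (-1)^{k+l}\phi(\alpha_1^k\alpha_2^l)$, so $\phi$ vanishes on every monomial of odd total degree, matching $\psi$ there. For fixed even total degree $2d$, set $u_k := \phi(\alpha_1^{2k}\alpha_2^{2(d-k)})$ for $0 \le k \le d$. Since $(\alpha_1^2 + \alpha_2^2)^d \equiv \mathbf{1}$ modulo the locality ideal, applying $\phi$ gives the single relation $\sum_{k=0}^{d}\binom{d}{k}u_k = 1_\F$. Invariance applied to $\alpha_1^{2d}$ under a generic rotation $(c,s)$, after discarding terms with an odd exponent, yields the polynomial identity
\begin{equation*}
u_d = \sum_{j=0}^{d}\binom{2d}{2j} c^{2d-2j} s^{2j}\, u_{d-j} \pmod{c^2+s^2-1};
\end{equation*}
eliminating $s^2 = 1 - c^2$ and equating coefficients of $c^{2i}$ (using that infinitely many $(c,s) \in SO(2,\F)$ exist over a field of characteristic zero, so pointwise agreement forces polynomial agreement in $c$) produces a family of further linear relations among the $u_k$. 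I would then verify that this system, combined with the locality relation, has the unique solution $u_k = \Omega(k, d-k)1_\F$, giving $\phi = \psi$.

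The principal obstacles are the invariance combinatorial identity in the existence half and the non-singularity of the resulting linear system in the uniqueness half. Both should be tractable from standard super-Catalan identities; a small-example computation at $d = 1, 2$ already exhibits the expected pattern, and the representation-theoretic decomposition of $\Pol$ under $SO(2,\F)$ suggests that the invariance constraints together with one locality condition per even degree are exactly sufficient to pin down the $u_k$.
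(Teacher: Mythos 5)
Your architecture matches the paper's (existence via the three axioms with Locality reduced to the ideal $\left\langle\alpha_1^2+\alpha_2^2-1\right\rangle$ and the Pascal-like recurrence; uniqueness via $-I$ plus generic rotations), and the Normalization and Locality parts are essentially the paper's argument. But there are genuine gaps, most of them stemming from the monomials $\alpha_1^k\alpha_2^l$ with $k$ and $l$ both \emph{odd}: these have even total degree, so the $h=-I$ step does not kill them, yet your proposal never deals with them. In the existence half you only check invariance on $\alpha_1^{2m}\alpha_2^{2n}$; for $k,l$ both odd, $h\cdot\alpha_1^k\alpha_2^l$ expands into a combination containing even--even monomials (e.g.\ $h\cdot\alpha_1\alpha_2 = cs\,\alpha_1^2+(c^2-s^2)\alpha_1\alpha_2-cs\,\alpha_2^2$), so $\psi(h\cdot\alpha_1^k\alpha_2^l)=0$ is a nontrivial cancellation that must be verified separately. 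Moreover the double-binomial identity you defer as ``the chief computational step'' is precisely the hard part; the paper sidesteps it by proving Locality first and then using it to replace $\alpha_2^{2n}$ by $(1-\alpha_1^2)^n$, so that invariance only ever needs to be computed for $h\cdot\alpha_1^{2m}$ (a single binomial expansion that collapses via $\bigl((1-u^2)^2+(2u)^2\bigr)/(1+u^2)^2=1$) and for $h\cdot\alpha_1^{2m-1}\alpha_2$. You should either adopt that reduction or actually prove your identity. Also, the fact that the vanishing ideal of $S^1$ is $\left\langle\alpha_1^2+\alpha_2^2-1\right\rangle$ is not safely citable as ``standard'' since $\F$ is not algebraically closed; the paper proves it by polynomial division together with the infinitude of $S^1$.

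In the uniqueness half, the step ``after discarding terms with an odd exponent'' is where the argument breaks. The expansion of $(c\alpha_1+s\alpha_2)^{2d}$ contains terms $\binom{2d}{i}c^is^{2d-i}\phi(\alpha_1^i\alpha_2^{2d-i})$ with $i$ odd, and at that stage nothing forces $\phi(\alpha_1^{\mathrm{odd}}\alpha_2^{\mathrm{odd}})=0$ --- these monomials have even total degree. Those terms carry a single factor of $s$ after reducing $s^2=1-c^2$ and must be retained; indeed they are exactly what lets one prove $\phi(\alpha_1^{2m-1}\alpha_2)=0$, a fact your scheme never establishes and without which $\phi=\psi$ does not follow (you never determine $\phi$ on any odd--odd monomial). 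The paper keeps both parities, uses Locality to reduce every unknown to $\phi(\alpha_1^{2j})$ or $\phi(\alpha_1^{2j-1}\alpha_2)$, substitutes the rational parametrization of the circle to get a polynomial identity with infinitely many roots, and extracts the coefficients of $\beta$ and $\beta^2$ to obtain $\phi(\alpha_1^{2m-1}\alpha_2)=0$ and the recurrence $2m\,\phi(\alpha_1^{2m})=(2m-1)\,\phi(\alpha_1^{2m-2})$; this determines everything without having to establish the non-singularity of the linear system you leave unverified. If you repair the odd--odd bookkeeping and replace the asserted solvability claim with such a recurrence, your outline becomes the paper's proof.
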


% See the table below.
% \begin{equation*}
% \begin{tabular}{c|ccccccc}
% $m\backslash n$ & $0$ & $1$ & $2$ & $3$ & $4$ & $5$ & $6$ \\ \hline
% $0$ & $1$ & $2$ & $6$ & $20$ & $70$ & $252$ & $924$ \\ 
% $1$ & $2$ & $2$ & $4$ & $10$ & $28$ & $84$ & $264$ \\ 
% $2$ & $6$ & $4$ & $6$ & $12$ & $28$ & $72$ & $198$ \\ 
% $3$ & $20$ & $10$ & $12$ & $20$ & $40$ & $90$ & $220$ \\ 
% $4$ & $70$ & $28$ & $28$ & $40$ & $70$ & $140$ & $308$ \\ 
% $5$ & $252$ & $84$ & $72$ & $90$ & $140$ & $252$ & $504$ \\ 
% $6$ & $924$ & $264$ & $198$ & $220$ & $308$ & $504$ & $924$
% \end{tabular}%
% \end{equation*}

% \begin{center}
% \textbf{Table 1}: $S\left( m,n\right) $ for $0\leq m,n \leq 6$.
% \end{center}

\section{Circular Integral Functional}

Denote by $\A = \A(\F)$ the two-dimensional \textit{affine plane} $\left\{\left[x,y\right] \ \colon \ x,y\in \F\right\}$, with the objects $\left[x,y\right]$ called \textit{points}. There is then the space $\F^{\A}$ consisting of functions from $\A$ to $\F$ which is an $\F$-algebra under pointwise addition and multiplication, and the evaluation map $\varepsilon \colon \Pol \rightarrow \F^{\A}$ which is an algebra homomorphism. Clearly we may regard $\F\left[\alpha_1\right]$ as a subalgebra of $\Pol$. Recall that any non-zero polynomial in $\F\left[\alpha_1\right]$ of degree $d$ has at most $d$ roots.

The group $\GL$ of invertible $2 \times 2$ matrices with entries in $\F$ left acts on $\Pol$ as follows: if $\pi = \pi\left(\alpha_1, \alpha_2\right)$ then
\begin{equation}
h \cdot \pi = \begin{pmatrix} h_{11} & h_{12} \\ h_{21} & h_{22} \end{pmatrix}\cdot \pi := \pi \left(
h_{11}\alpha_1 +h_{21}\alpha_2 ,h_{12}\alpha_1 + h_{22}\alpha_2 \right). \label{G action}
\end{equation}
Additionally, $\GL$ right acts on $\A$ and left acts on $\F^{\A}$ as follows:
\begin{align*}
    \left[x_1, x_2\right] \cdot h &:= \left[h_{11}x_1 + h_{21} x_2, h_{12}x_1 + h_{22} x_2\right] \\
    (h \cdot f)(x_1, x_2) &:= f\left(\left[x_1,x_2\right] \cdot h\right) = f(h_{11}x_1 + h_{21} x_2, h_{12}x_1 + h_{22} x_2).
\end{align*}

The group $\SO$ of  matrices $h$ that satisfy $h^{-1} = h^{T}$ of determinant $1_{\F}$ is then a subgroup of $\GL$ and is called the rotation group. The action of $\SO$ on $\Pol$ is induced as the restriction of the action of $\GL$ on $\Pol$. This action respects evaluation: for any $h \in \SO$ and $\pi \in \Pol$,
\begin{equation}
\varepsilon\left(h \cdot \pi \right) = h \cdot \varepsilon \left( \pi \right).  \label{SO action respects evaluation}
\end{equation}
In a similar manner, we also get an action of $\SO$ on $\A$ and $\F^{\A}$.

We define a symmetric bilinear form on $\A$, given by $\left[ x_{1},y_{1}\right] \cdot \left[ x_{2},y_{2}\right] := x_{1}x_{2}+y_{1}y_{2}$. The associated quadratic form $\left[x,y\right] \cdot \left[x,y\right] = x^2 + y^2$ then gives rise to the (Euclidean) unit circle
\begin{equation*}
S^1 = S^1\left(\F\right) := \left\{\left[x,y\right] \in \A \colon x^2 + y^2 = 1_{\F}\right\}.
\end{equation*}
\begin{lemma}\label{Parametrization of S^1}
    Each point on $S^1$ except $[-1,0]$ can be written as $\left[\frac{1-u^{2}}{1+u^{2}},\frac{2u}{1+u^{2}}\right]$ for some $u\in \F$ such that $1+u^2 \neq 0$. Consequently, $S^1$ is an infinite set.
\end{lemma}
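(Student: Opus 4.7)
The plan is to use the standard rational (stereographic) parametrization of the circle through the pole $[-1,0]$, working entirely inside $\F$.

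For one direction, given $u \in \F$ with $1+u^2 \neq 0$, a direct expansion yields $(1-u^2)^2 + (2u)^2 = (1+u^2)^2$, and dividing by $(1+u^2)^2$ shows that $\left[\frac{1-u^2}{1+u^2}, \frac{2u}{1+u^2}\right]$ satisfies the defining equation $x^2 + y^2 = 1_{\F}$ of $S^1$.

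For the converse, take $[x,y] \in S^1$ with $[x,y] \neq [-1,0]$. First I observe that $x \neq -1$: if $x = -1$ then $y^2 = 1 - x^2 = 0$ forces $y = 0$, contradicting $[x,y] \neq [-1,0]$. Hence $1+x \neq 0$ in $\F$, so $u := y/(1+x)$ is a well-defined element of $\F$. Using $y^2 = 1 - x^2 = (1-x)(1+x)$, a short computation gives $u^2 = (1-x)/(1+x)$, whence $1+u^2 = 2/(1+x)$. This is nonzero because $\mathrm{char}\,\F = 0$ implies $2 \neq 0_{\F}$, and substituting back recovers $\frac{1-u^2}{1+u^2} = x$ and $\frac{2u}{1+u^2} = y$, as required.

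For the consequence, I would note that the parametrization is injective on its domain, since the assignment $[x,y] \mapsto y/(1+x)$ inverts it. Because $\mathrm{char}\,\F = 0$, the field $\F$ contains an isomorphic copy of $\Q$ and is therefore infinite, while the excluded set $\{u \in \F : 1+u^2 = 0\}$ contains at most two elements (being roots of a polynomial of degree two). The parametrization thus has infinite image, so $S^1$ is infinite. No serious obstacle is anticipated; the only care required is in tracking where the characteristic-zero hypothesis enters — once to ensure $2 \neq 0_{\F}$ so that $1+u^2 \neq 0$ in the converse direction, and once to guarantee that $\F$ itself is infinite.
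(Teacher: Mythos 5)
Your proof is correct and follows essentially the same route as the paper: the standard stereographic parametrization of $S^1$ from the pole $[-1,0]$, checking the identity $(1-u^2)^2+(2u)^2=(1+u^2)^2$ in one direction and recovering $u=y/(1+x)$ in the other. The paper phrases the converse geometrically via the line $y=ux+u$ through $[-1,0]$ and $[0,u]$, whereas you write out the explicit inverse and verify $1+u^2=2/(1+x)\neq 0$; this is only a stylistic difference, and your tracking of where characteristic zero is used is a welcome bit of extra care.
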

\begin{proof}
    The identity $\left(\frac{1-u^{2}}{1+u^{2}}\right)^{2} + \left(\frac{2u}{1+u^{2}}\right)^{2}=1_{\F}$ holds for all $u \in \F$ for which $u^2 \neq -1$. The line $y=ux+u$ through the points $\left[-1,0\right]$ and $\left[0,u\right]$ intersects $S^1$ in exactly two points, $\left[-1,0\right]$ and $\left[\frac{1-u^{2}}{1+u^{2}}, \frac{2u}{1+u^{2}}\right]$. Hence every point on $S^1$ except $\left[-1,0\right]$ corresponds to exactly one $u \in \F$ for which $u^2 \neq -1$. Since there are infinitely many $u \in \F$ for which $u^2 \neq -1$, $S^1$ is an infinite set.
\end{proof}

\begin{corollary}\label{Parametrization of SO}
    The rotation group $\SO$ admits a parametrization
    \begin{align*}
    \SO =\left\{h_u = \frac{1}{1+u^{2}} 
    \begin{pmatrix}
        1-u^{2} & -2u \\ 
        2u & 1-u^{2}
    \end{pmatrix}
    \colon \ u \in \F, u^2 \neq -1\right\} \cup \left\{
-I\right\}.
    \end{align*}
\end{corollary}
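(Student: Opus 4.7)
The plan is to reduce the parametrization of $\SO$ entirely to Lemma \ref{Parametrization of S^1}, by first establishing that any $h \in \SO$ is determined by its first column, which must lie on $S^1$.

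First I would take $h = \begin{pmatrix} a & b \\ c & d \end{pmatrix} \in \SO$ and expand $h^T h = I$ and $\det h = 1_{\F}$ in coordinates. The $(1,1)$ entry of $h^T h = I$ gives $a^2 + c^2 = 1_{\F}$, so $[a,c] \in S^1$. The $(1,2)$ entry gives $ab + cd = 0$, which says the second column $[b,d]$ is perpendicular (with respect to the bilinear form on $\A$) to $[a,c]$. The $(2,2)$ entry gives $b^2 + d^2 = 1_{\F}$, so $[b,d] \in S^1$ as well. The two unit vectors perpendicular to $[a,c]$ are $[-c,a]$ and $[c,-a]$; the determinant condition $ad - bc = 1_{\F}$ rules out the second (which would produce $-(a^2+c^2) = -1_{\F}$) and forces $[b,d] = [-c,a]$. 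Hence every element of $\SO$ has the shape $\begin{pmatrix} a & -c \\ c & a \end{pmatrix}$ with $[a,c] \in S^1$.

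Next I would invoke Lemma \ref{Parametrization of S^1} on $[a,c]$. Either $[a,c] = [-1,0]$, yielding $h = -I$, or there exists a unique $u \in \F$ with $u^2 \neq -1$ such that $[a,c] = \left[\tfrac{1-u^2}{1+u^2}, \tfrac{2u}{1+u^2}\right]$, and substituting into the displayed shape produces exactly $h_u$. This shows $\SO$ is contained in the asserted union. For the reverse inclusion, I would verify directly that $-I$ and each $h_u$ satisfy $h^T h = I$ and $\det h = 1_{\F}$; both computations reduce to the identity $(1-u^2)^2 + (2u)^2 = (1+u^2)^2$, which was already the crux of Lemma \ref{Parametrization of S^1}.

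There is no serious obstacle here; the only subtle point is the sign choice $[b,d] = [-c,a]$ versus $[c,-a]$, which is dispatched by the determinant condition. Everything else is bookkeeping once Lemma \ref{Parametrization of S^1} is in hand.
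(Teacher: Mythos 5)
Your argument is correct and is exactly the derivation the paper intends: the corollary is stated without proof as an immediate consequence of Lemma \ref{Parametrization of S^1}, and your reduction (first column lies on $S^1$, second column forced to be $[-c,a]$ by orthogonality plus the determinant condition, then substitute the rational parametrization) is the natural way to fill in that gap. No issues.
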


We now introduce the central object of this paper: a linear functional on $\Pol$ that generalizes normalized integration over the Euclidean unit circle over $\R$. We say that a linear functional $\phi \colon \Pol \rightarrow \F$ is a \textit{circular integral functional} on $\Pol$ with respect to $S^1$ precisely when it satisfies the following three conditions:

\begin{description}
\item[(Normalization)] For the multiplicative identity $\mathbf{1}$ of $\Pol$, we have $\phi\left(\mathbf{1}\right) = 1_{\F}$.

\item[(Locality)] If $\pi \in \Pol$ such that $\varepsilon(\pi)$ is the zero function on $S^1$, then $\phi(\pi) = 0$.

\item[(Invariance)] The functional $\phi$ is $\SO$-invariant: $\phi(h \cdot \pi) = \phi(\pi)$ for any $\pi \in \Pol$ and $h \in \SO$.
\end{description}

\section{Existence and Uniqueness}
Our strategy to prove Theorem \ref{Existence and Uniqueness Theorem Intro} is divided into two main steps. First, we show that $\psi$ satisfies the Normalization, Locality, and Invariance conditions. Next, we demonstrate that if such a circular integral functional $\Pol$ with respect to $S^1$ exists, it is uniquely determined.

It is easy to see that the Normalization condition holds. The next two lemmas are needed to prove the Locality of $\psi$.

\begin{lemma} \label{Projection of S^1 on each axis}
Both $S^1_{x_1} = \left\{x_1 \in \mathbb{F} \, \colon \, \left[x_1,x_2\right] \in S^1\right\}$ and $S^1_{x_2} = \left\{x_2 \in \mathbb{F} \, \colon \, \left[x_1,x_2\right] \in S^1\right\}$ have infinitely many elements. 
\end{lemma}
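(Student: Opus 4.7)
The plan is to leverage the infinitude of $S^1$ established in Lemma \ref{Parametrization of S^1} together with a fiber-counting argument for the two coordinate projections $[x_1,x_2]\mapsto x_1$ and $[x_1,x_2]\mapsto x_2$ from $S^1$ to $\F$. The key observation is that these projections have fibers of cardinality at most $2$.

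Specifically, I would fix a value $x_1 \in S^1_{x_1}$ and note that any point $[x_1,x_2] \in S^1$ must satisfy $x_2^2 = 1_{\F} - x_1^2$. Viewed as a polynomial equation in $x_2$, this has degree at most $2$, so by the fact recalled immediately after the introduction of $\Pol$ (that any nonzero polynomial in $\F[\alpha_1]$ of degree $d$ has at most $d$ roots in $\F$), there are at most two choices of $x_2$ yielding a point of $S^1$ with first coordinate $x_1$. Consequently
\begin{equation*}
    |S^1| \;\leq\; 2\,|S^1_{x_1}|.
\end{equation*}

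Now assume for contradiction that $S^1_{x_1}$ is finite; then the right-hand side is finite, contradicting the infinitude of $S^1$ asserted in Lemma \ref{Parametrization of S^1}. Hence $S^1_{x_1}$ is infinite. The identical argument with the roles of the two coordinates interchanged, using $x_1^2 = 1_{\F} - x_2^2$, shows that $S^1_{x_2}$ is infinite as well.

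I do not anticipate any real obstacle here; the main point is just recognizing that the lemma does not require explicitly exhibiting infinitely many elements in each projection (though one could alternatively do this by showing the parametrization maps $u\mapsto \tfrac{1-u^2}{1+u^2}$ and $u\mapsto \tfrac{2u}{1+u^2}$ are at most $2$-to-$1$). The fiber-counting argument is cleaner because it piggybacks directly on the already-proved infinitude of $S^1$ and uses only the elementary bound on roots of univariate polynomials.
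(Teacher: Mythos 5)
Your proof is correct, but it runs along a slightly different track than the paper's. You bound the fibers of the projection $[x_1,x_2]\mapsto x_1$ by $2$ (via the root bound on $\alpha_2^2-(1_\F-x_1^2)$), deduce $|S^1|\leq 2|S^1_{x_1}|$, and conclude from the infinitude of $S^1$; you then repeat symmetrically for $x_2$. The paper instead first observes that $[x_1,x_2]\in S^1$ implies $[x_2,x_1]\in S^1$, so $S^1_{x_1}=S^1_{x_2}$, and then notes $S^1\subseteq S^1_{x_1}\times S^1_{x_2}$, so finiteness of the projections would force finiteness of $S^1$ --- no root counting needed. Both arguments lean on the same input (Lemma \ref{Parametrization of S^1}); yours yields the sharper quantitative statement that the projections are at most $2$-to-$1$, while the paper's swap trick dispenses with any polynomial argument and handles both projections in one stroke. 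Either is perfectly adequate for the lemma.
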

\begin{proof}
    For any $\left[x_1,x_2\right] \in S^1$, we have that $\left[x_2,x_1\right] \in S^1$, so $S^1_{x_1} = S^1_{x_2}$. If $S^1_{x_1} = S^1_{x_2}$ is finite, then so is $S^1_{x_1} \times S^1_{x_2}$ and consequently $S^1$. This contradicts the fact that $S^1$ is infinite from Lemma \ref{Parametrization of S^1}.
\end{proof}

The crucial property of polynomials in $\Pol$ that evaluate to the zero function on $S^1$ is that they must lie in $\left\langle\alpha_1^2 + \alpha_2^2 -1\right\rangle$, the ideal generated by $\alpha_1^2 + \alpha_2^2 -1$. We offer an elementary proof below by utilising the multivariate polynomial division which requires a choice of monomial ordering. This has a flavour of Hilbert's Nullstellensatz which usually works over algebraically closed fields, although our argument does not assume that $\F$ is an algebraically closed field.
\begin{lemma}
\label{Lemma division algorithm} If $\pi \in \Pol$ satisfies $\varepsilon(\pi) = 0$ on $S^1$, then $\pi \in \left\langle\alpha_1^2+\alpha_2^2-1\right\rangle$.
\end{lemma}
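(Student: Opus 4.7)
The plan is to reduce modulo $\alpha_1^2+\alpha_2^2-1$ to obtain a remainder of very restricted shape, and then show the remainder must vanish by exploiting the $y \mapsto -y$ symmetry of $S^1$ together with the infinitude of the projection $S^1_{x_1}$.

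More concretely, I view $\pi$ as an element of $\mathbb{F}[\alpha_1][\alpha_2]$. The polynomial $\alpha_2^2 + (\alpha_1^2 - 1)$ is monic of degree $2$ in $\alpha_2$, so univariate polynomial division in the ring $\mathbb{F}[\alpha_1][\alpha_2]$ yields
\begin{equation*}
\pi = (\alpha_1^2 + \alpha_2^2 - 1)\, q(\alpha_1,\alpha_2) + A(\alpha_1) + B(\alpha_1)\,\alpha_2
\end{equation*}
for some $q \in \Pol$ and $A, B \in \mathbb{F}[\alpha_1]$. This is exactly the multivariate division alluded to in the preamble to the lemma, performed under the lexicographic order with $\alpha_2 > \alpha_1$. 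It suffices to show that $A = B = 0$.

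Since $\alpha_1^2+\alpha_2^2-1$ vanishes identically on $S^1$ and $\varepsilon(\pi) = 0$ on $S^1$ by hypothesis, the remainder $r(\alpha_1,\alpha_2) := A(\alpha_1) + B(\alpha_1)\alpha_2$ also vanishes on every point of $S^1$. Now fix any $x_1 \in S^1_{x_1}$ with $x_1 \neq \pm 1$; such an $x_1$ picks out $x_2 \in \mathbb{F}^{\times}$ with $[x_1,x_2] \in S^1$, and by symmetry $[x_1,-x_2] \in S^1$ as well. Substituting both into $r$ gives
\begin{equation*}
A(x_1) + B(x_1)\, x_2 = 0 \quad \text{and} \quad A(x_1) - B(x_1)\, x_2 = 0,
\end{equation*}
whence $A(x_1) = 0$ and, since $x_2 \neq 0$, $B(x_1) = 0$.

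By Lemma \ref{Projection of S^1 on each axis}, $S^1_{x_1}$ is infinite, so removing the two values $\pm 1$ still leaves infinitely many $x_1$ at which both $A$ and $B$ vanish. A nonzero polynomial in $\mathbb{F}[\alpha_1]$ has only finitely many roots, so $A = 0$ and $B = 0$ in $\mathbb{F}[\alpha_1]$. Therefore $\pi = (\alpha_1^2 + \alpha_2^2 - 1)\,q$ lies in the ideal, as required. The only mildly delicate point is justifying that the divisor is monic in $\alpha_2$ so that univariate division over $\mathbb{F}[\alpha_1]$ applies without any invertibility hypothesis; everything else is essentially bookkeeping with the infinitude of $S^1_{x_1}$.
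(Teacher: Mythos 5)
Your proof is correct and takes essentially the same approach as the paper's: divide by $\alpha_1^2+\alpha_2^2-1$ (monic in $\alpha_2$) to obtain a remainder $A(\alpha_1)+B(\alpha_1)\alpha_2$, then use the $[x_1,x_2]\mapsto[x_1,-x_2]$ symmetry of $S^1$ together with the infinitude of the projection $S^1_{x_1}$ from Lemma \ref{Projection of S^1 on each axis} to force $A=B=0$. The only cosmetic difference is that you phrase the reduction as univariate division over $\F[\alpha_1]$ by a monic polynomial rather than as multivariate division under a monomial ordering.
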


\begin{proof}
Fix a monomial ordering $\preccurlyeq$ such that $\alpha_1^{k_1}\alpha_2^{l_1} \preccurlyeq \alpha_1^{k_2}\alpha_2^{l_2}$ if either $k_1 < k_2$ or $k_1 = k_2$ and $l_1 < l_2$. With respect to this ordering, any $\pi \in \Pol$ can be written as $\pi = \left(\alpha_1^2 + \alpha_2^2 - 1\right) \pi _{0} + \alpha_2 \omega + \rho$ for some $\pi _{0} \in \Pol$ and $\omega, \rho \in \mathbb{F}\left[\alpha_1\right]$.

Since $\pi$ evaluates to the zero function on $S^1$, we have that
\begin{equation}
    0 = \varepsilon(\pi)(x_1,x_2) = x_2 \varepsilon(\omega)(x_1,x_2) + \varepsilon(\rho)(x_1,x_2) = x_2 \varepsilon(\omega)(x_1,0) + \varepsilon(\rho)(x_1,0) \label{Evaluation of pi after poly division}
\end{equation}
for all $\left[x_1,x_2\right] \in S^1$, with the last equation is due to $\omega, \rho \in \F\left[\alpha_1\right]$. Now consider the set $S^1_\ast = \left\{\left[x_1,x_2\right] \in S^1 \, \colon \, x_2 \neq 0\right\}$ which is non-empty since $[0,1] \in S^1_\ast$. For any $\left[x_1,x_2\right] \in S^1_\ast$, the point $\left[x_1,-x_2\right] \in S^1_\ast$ is different from $\left[x_1,x_2\right]$ so (\ref{Evaluation of pi after poly division}) forces $\varepsilon(\omega)(x_1, 0) = \varepsilon(\rho)(x_1, 0) = 0$ for all $\left[x_1, x_2\right] \in S^1$. By Lemma \ref{Projection of S^1 on each axis}, $\omega$ and $\rho$ have infinitely many roots, so they must be the zero polynomial.
\end{proof}

\begin{theorem}[Locality of $\protect\psi$]
\label{Theorem Locality psi_b} The linear functional $\psi $ satisfies the Locality condition.
\end{theorem}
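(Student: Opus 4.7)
The plan is to combine the previous lemma with the Pascal-like recurrence for $\Omega$. By Lemma \ref{Lemma division algorithm}, any $\pi \in \Pol$ with $\varepsilon(\pi) = 0$ on $S^1$ can be written as $\pi = (\alpha_1^2 + \alpha_2^2 - 1)\,q$ for some $q \in \Pol$. Since $\psi$ is linear, and since $q$ is a finite $\F$-linear combination of monomials $\alpha_1^k \alpha_2^l$, it suffices to verify that
\[
\psi\bigl((\alpha_1^2 + \alpha_2^2 - 1)\,\alpha_1^k \alpha_2^l\bigr) = 0 \qquad \text{for all } k, l \in \N.
\]

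Expanding the product gives the three monomials $\alpha_1^{k+2}\alpha_2^{l}$, $\alpha_1^{k}\alpha_2^{l+2}$, and $\alpha_1^{k}\alpha_2^{l}$. If either $k$ or $l$ is odd, then all three monomials still contain an odd exponent in the same slot, so by the definition (\ref{CIF blue geometry}) each term of $\psi$ is $0$ and we are done. If instead $k = 2m$ and $l = 2n$, then applying $\psi$ yields
\[
\psi\bigl((\alpha_1^2 + \alpha_2^2 - 1)\,\alpha_1^{2m}\alpha_2^{2n}\bigr) = \bigl(\Omega(m+1,n) + \Omega(m,n+1) - \Omega(m,n)\bigr)\,1_{\F}.
\]

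The identity $\Omega(m,n) = \Omega(m+1,n) + \Omega(m,n+1)$ was already recorded in the introduction as an immediate consequence of the super Catalan recurrence $4S(m,n) = S(m+1,n) + S(m,n+1)$; this is a purely rational identity, so its image under the embedding $\Q \hookrightarrow \F$ vanishes in $\F$. Hence the displayed quantity is $0$, and locality follows.

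The only step that requires any care is the invocation of Lemma \ref{Lemma division algorithm}, which has already been proved; beyond that, the argument is a one-line linear-algebra reduction followed by the Pascal-like recurrence. There is no real obstacle, only the need to treat the parities of $k$ and $l$ separately and to remember that $\Omega(m,n)$ is defined as a rational number embedded in $\F$ via $1_\F$, so the recurrence transfers from $\Q$ to $\F$ without issue.
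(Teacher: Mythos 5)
Your proof is correct and follows essentially the same route as the paper: reduce via Lemma \ref{Lemma division algorithm} and linearity to the monomial case $\psi\bigl((\alpha_1^2+\alpha_2^2-1)\alpha_1^k\alpha_2^l\bigr)$, handle odd exponents trivially, and invoke the Pascal-like identity $\Omega(m,n)=\Omega(m+1,n)+\Omega(m,n+1)$ for the even case. Your extra remark that the rational identity transfers to $\F$ via the embedding is a nice touch of care but not a departure from the paper's argument.
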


\begin{proof}
If $\pi \in \Pol$ evaluates to the zero function on $S^1$, then $\pi \in \left\langle \alpha_1^{2} + \alpha_2^{2}-1\right\rangle$ from Lemma \ref{Lemma division algorithm}. By linearity, it suffices to show that
\begin{align*}
\psi\left( \left( \alpha_1^{2} + \alpha_2^{2} - 1\right) \alpha_1^{k}\alpha_2^{l}\right) = \psi\left(\alpha_1^{k+2}\alpha_2^{l}\right) + \psi\left(\alpha_1^{k}\alpha_2^{l+2}\right) -\psi\left(\alpha_1^{k}\alpha_2^{l}\right)
\end{align*}
vanishes. Clearly it does if either $k$ or $l$ is odd, and if $k = 2m$ and $l = 2n$, the right-hand side simplifies to $\Omega \left(m+1,n\right) + \Omega\left(m,n+1\right) - \Omega\left(m,n\right) = 0$ by the Pascal-like property.
\end{proof}

Finally, to prove the Invariance of $\psi$, we need the following three lemmas.
\begin{lemma}\label{Lemma degree of action}
    For any $h \in \SO$ and natural numbers $k, l$, each term of $h \cdot \alpha_1^k\alpha_2^l$ has degree $k+l$. 
\end{lemma}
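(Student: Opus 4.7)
The plan is to observe that the $\SO$-action in (\ref{G action}) is a linear substitution of the variables, hence it preserves the grading on $\Pol$ by total degree. Concretely, I would compute
\begin{align*}
h \cdot \alpha_1^k \alpha_2^l = \bigl(h_{11}\alpha_1 + h_{21}\alpha_2\bigr)^k \bigl(h_{12}\alpha_1 + h_{22}\alpha_2\bigr)^l
\end{align*}
directly from the definition of the action in (\ref{G action}), and then note that each of the two factors $h_{1j}\alpha_1 + h_{2j}\alpha_2$ is a homogeneous linear form in $\alpha_1,\alpha_2$.

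Next I would invoke the elementary fact that a product of homogeneous polynomials is homogeneous, with degrees adding: $(h_{11}\alpha_1+h_{21}\alpha_2)^k$ is homogeneous of degree $k$ and $(h_{12}\alpha_1+h_{22}\alpha_2)^l$ is homogeneous of degree $l$, so the product is homogeneous of degree $k+l$. Expanding using the binomial theorem makes this explicit: every resulting monomial has the form $c\,\alpha_1^{a+b}\alpha_2^{(k-a)+(l-b)}$ with $0\le a\le k$ and $0\le b\le l$, whose total degree is $k+l$.

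There is no real obstacle here; the statement is essentially a structural observation that $\SO$ acts by degree-preserving linear substitutions, and the argument is a one-line binomial expansion. Note that the special case $h=-I\in\SO$ (from the second piece of the parametrization in Corollary \ref{Parametrization of SO}) is covered automatically, since $(-I)\cdot\alpha_1^k\alpha_2^l = (-1)^{k+l}\alpha_1^k\alpha_2^l$ is still homogeneous of degree $k+l$.
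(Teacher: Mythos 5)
Your proof is correct and follows essentially the same route as the paper: apply the definition of the action in (\ref{G action}) to get $\left(h_{11}\alpha_1+h_{21}\alpha_2\right)^k\left(h_{12}\alpha_1+h_{22}\alpha_2\right)^l$ and expand by the Binomial Theorem to see every monomial has total degree $k+l$. Your added remarks about homogeneity being preserved under products and the $h=-I$ case are fine but not needed beyond what the paper already does.
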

\begin{proof}
    By using (\ref{G action}), $h \cdot \alpha_1^k \alpha_2^l = \left(h_{11} \alpha_1 + h_{21} \alpha_2\right)^k \left(h_{12} \alpha_1 + h_{22} \alpha_2\right)^l$. Expanding this using the Binomial Theorem, we see that the degree of each term is always $k+l$.
    % \begin{align*}
    %     h \cdot \alpha_1^k\alpha_2^l &= \left(h_{11} \alpha_1 + h_{21} \alpha_2\right)^k \left(h_{12} \alpha_1 + h_{22} \alpha_2\right)^l \\
    %     &= \left(\sum_{s=0}^k \binom{k}{s} h_{11}^s h_{21}^{k-s} \alpha_1^s \alpha_2^{k-s}\right) \left(\sum_{t=0}^l \binom{l}{t} h_{12}^t h_{22}^{l-t} \alpha_1^t \alpha_2^{l-t}\right) \\
    %     &= \sum_{s=0}^k \sum_{t=0}^l \binom{k}{s} \binom{l}{t} h_{11}^s h_{12}^t h_{21}^{k-s} h_{22}^{l-t} \alpha_1^{s+t} \alpha_2^{k+l-s-t}.
    % \end{align*}
    % We see that the degree of each term in the summation above is always $k+l$.
\end{proof}

\begin{lemma}
\label{Lemma pre-Invariance} For any natural numbers $m$ and $h \in \SO$, we have that $\psi\left(h \cdot \alpha_1^{2m}\right) = \psi\left(\alpha_1^{2m}\right)$ and $\psi\left(h \cdot \alpha_1^{2m-1}\alpha_2\right) = \psi\left(\alpha^{2m-1}\alpha_2\right)$.
\end{lemma}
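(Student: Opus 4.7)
The plan is to reduce to the two cases for $h$ provided by Corollary \ref{Parametrization of SO}. When $h = -I$, we have $(-I) \cdot \alpha_1^k \alpha_2^l = (-1)^{k+l} \alpha_1^k \alpha_2^l$, and since both $2m$ and $(2m-1)+1 = 2m$ are even, the action is trivial on each of our two monomials, so both identities hold. The remaining case is $h = h_u$ for some $u \in \F$ with $u^2 \neq -1$. Writing $a := 1 - u^2$ and $b := 2u$, so that $a^2 + b^2 = (1+u^2)^2$, the action (\ref{G action}) gives
\begin{align*}
h_u \cdot \alpha_1 = \frac{a\alpha_1 + b\alpha_2}{1+u^2}, \qquad h_u \cdot \alpha_2 = \frac{-b\alpha_1 + a\alpha_2}{1+u^2}.
\end{align*}
Both identities then reduce to expanding $(a\alpha_1 + b\alpha_2)^{2m}$, respectively $(a\alpha_1 + b\alpha_2)^{2m-1}(-b\alpha_1 + a\alpha_2)$, by the binomial theorem and applying $\psi$ termwise.

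For the first identity, when $\psi$ is applied to the binomial expansion of $h_u \cdot \alpha_1^{2m}$, only indices $k = 2j$ survive (those with even $\alpha_2$-exponent, which forces even $\alpha_1$-exponent as well). The key observation is the factorial identity
\begin{align*}
\binom{2m}{2j}\,\Omega(m-j,j) = \binom{m}{j}\,\Omega(m,0),
\end{align*}
which is immediate from the definition of $\Omega$. Using it, the surviving sum collapses via the binomial theorem to $\Omega(m,0)\bigl((1-u^2)^2 + (2u)^2\bigr)^m = \Omega(m,0)(1+u^2)^{2m}$, and dividing by the prefactor $(1+u^2)^{2m}$ gives $\Omega(m,0) = \psi(\alpha_1^{2m})$.

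For the second identity the target value is $0$, so I must show that the expansion yields a sum that vanishes identically. The product $(a\alpha_1 + b\alpha_2)^{2m-1}(-b\alpha_1 + a\alpha_2)$ splits into two binomial sums; after applying $\psi$, the surviving monomials in the two sums require opposite parities of the summation index, but both contributions re-index to terms proportional to $a^{2m-1-2j}b^{2j+1}$ for $j = 0, 1, \ldots, m-1$. The coefficient of each such term is the difference $\binom{2m-1}{2j+1}\Omega(m-1-j,j+1) - \binom{2m-1}{2j}\Omega(m-j,j)$, and a short factorial manipulation shows that both products equal $\frac{2(2m-1)!}{4^m (m-1-j)!\,j!\,m!}$, so the difference vanishes termwise.

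The main obstacle is spotting the two factorial identities above; once they are in hand, everything else is the binomial theorem applied to $a^2 + b^2 = (1+u^2)^2$, and the two parts proceed in parallel. No appeal to Locality is needed at this stage; the argument is purely a computation with the explicit parametrization of $\SO$.
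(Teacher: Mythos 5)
Your proposal is correct and follows essentially the same route as the paper: reduce to $h=-I$ and $h=h_u$, expand by the binomial theorem, and use the identity $\binom{2m}{2j}\Omega(m-j,j)=\binom{m}{j}\Omega(m,0)$ to collapse the surviving sum via $\bigl((1-u^2)^2+(2u)^2\bigr)^m=(1+u^2)^{2m}$. For the second identity the paper only remarks that it is ``more involved but done similarly''; your termwise cancellation $\binom{2m-1}{2j+1}\Omega(m-1-j,j+1)=\binom{2m-1}{2j}\Omega(m-j,j)=\frac{2(2m-1)!}{4^m(m-1-j)!\,j!\,m!}$ checks out and supplies exactly the detail the paper omits.
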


\begin{proof}
The statement is obviously true for $h = -I$. Now for $h = h_u$ defined in Corollary \ref{Parametrization of SO},
\begin{align*}
\psi\left(h_{u} \cdot \alpha_1^{2m}\right) = \sum_{s=0}^{2m} \binom{2m}{s} \left(\frac{1-u^{2}}{1+u^{2}}\right)^{s} \left(\frac{2u}{1+u^{2}}\right)^{2m-s} \psi\left(\alpha_1^{s} \alpha_2^{2m-s}\right)
\end{align*}
but since the odd indices do not contribute to the sum, we just need to consider the even indices:
\begingroup
\allowdisplaybreaks
\begin{align}
\psi\left(h_{u} \cdot \alpha_1^{2m}\right) &= \sum_{s=0}^{m} \binom{2m}{2s} \left(\frac{1-u^{2}}{1+u^{2}}\right) ^{2s} \left(\frac{2u}{1+u^{2}}\right)^{2m-2s} \psi\left(\alpha_1^{2s}\alpha_2^{2m-2s}\right) \nonumber \\
&= \sum_{s=0}^{m} \binom{2m}{2s} \Omega\left(s,m-s\right) \left(\frac{1-u^{2}}{1+u^{2}}\right)^{2s} \left( \frac{2u}{1+u^{2}}\right)^{2m-2s} 1_{\F} \nonumber \\
& =\frac{\left(2m\right)!}{4^{m}m!m!} \sum_{s=0}^{m} \binom{m}{s} \left(
\frac{1-u^{2}}{1+u^{2}}\right)^{2s}\left(\frac{2u}{1+u^{2}}\right)^{2m-2s} 1_{\F}. \label{Pre-invariance}
\end{align}
\endgroup
Using the Binomial Theorem, (\ref{Pre-invariance}) simplifies to
\begin{equation*}
\psi\left(\alpha_1^{2m}\right) \left(\left(\frac{1-u^{2}}{1+u^{2}}\right)^{2} + \left(\frac{2u}{1+u^{2}}\right) ^{2}\right)^{m} = \psi\left(\alpha_1^{2m}\right)1_{\F}.
\end{equation*}
The proof that $\psi\left(h_{u} \cdot \alpha_1^{2m-1}\alpha_2\right) = \psi\left(\alpha_1^{2m-1}\alpha_2\right)$ is more involved but done similarly.
\end{proof}

\begin{lemma}\label{Locality and Invariance action}
    If $\pi \in \Pol$ evaluates to the zero function on $S^1$, then so does $h \cdot \pi$ for any $h \in \SO$.
\end{lemma}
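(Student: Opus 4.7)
My plan is to reduce the claim to the fact that the right action of $\SO$ on $\A$ preserves the unit circle $S^1$, and then verify that single geometric statement by a matrix computation.

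First I would invoke the compatibility (\ref{SO action respects evaluation}), namely $\varepsilon(h\cdot\pi) = h\cdot\varepsilon(\pi)$, together with the definition of the $\SO$-action on $\F^{\A}$ recorded just after (\ref{G action}). Combining these gives, for every $[x_1,x_2]\in\A$,
$$\varepsilon(h\cdot\pi)(x_1,x_2) \;=\; \bigl(h\cdot\varepsilon(\pi)\bigr)(x_1,x_2) \;=\; \varepsilon(\pi)\bigl([x_1,x_2]\cdot h\bigr).$$
So if I can show that $[x_1,x_2]\cdot h \in S^1$ whenever $[x_1,x_2]\in S^1$, then the hypothesis that $\varepsilon(\pi)$ vanishes on $S^1$ immediately forces $\varepsilon(h\cdot\pi)$ to vanish on $S^1$ as well, which is the desired conclusion.

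For the invariance of $S^1$ under the right $\SO$-action, I would identify a point $[x_1,x_2]$ with the row vector $(x_1, x_2)$ so that the symmetric bilinear form reads $[x_1,x_2]\cdot[x_1,x_2] = (x_1,x_2)(x_1,x_2)^{T}$. Then
$$\bigl([x_1,x_2]\cdot h\bigr)\cdot\bigl([x_1,x_2]\cdot h\bigr) \;=\; (x_1,x_2)\,hh^{T}\,(x_1,x_2)^{T}.$$
Since $h\in\SO$ satisfies $h^{-1}=h^{T}$, we have $hh^{T}=I$, and the right-hand side collapses to $x_1^{2}+x_2^{2}=1_{\F}$. Hence $[x_1,x_2]\cdot h\in S^1$, as required.

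I do not expect a genuine obstacle: the content of the lemma is simply that rotations preserve the unit circle. The one delicate bookkeeping point is to respect the left/right action conventions in (\ref{G action}) and the lines just after it, so that one ends up with $hh^{T}$ (not $h^{T}h$) in the middle of the product; mixing these up would make the argument dependent on a stronger identity than $\SO$ supplies. As a less elegant alternative, one could instead split on the two cases of the parametrization in Corollary \ref{Parametrization of SO}, verifying the claim by direct substitution for $h_u$ and for $-I$, but the matrix argument above handles both at once.
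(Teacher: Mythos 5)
Your proposal is correct and follows essentially the same route as the paper: both reduce the claim, via the compatibility $\varepsilon(h\cdot\pi)=h\cdot\varepsilon(\pi)$, to the fact that the right action of $\SO$ preserves $S^1$. The only difference is that the paper simply asserts that $[x_1,x_2]\mapsto[x_1,x_2]\cdot h$ is a bijection on $S^1$, whereas you supply the (correct) verification $(x_1,x_2)hh^{T}(x_1,x_2)^{T}=x_1^2+x_2^2$ using $h^{-1}=h^{T}$.
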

\begin{proof}
    For an arbitrary $h \in \SO$, the map $\left[x_1, x_2\right] \mapsto \left[x_1,x_2\right] \cdot h$ is a bijection on $S^1$. Choose any $\left[x_1, x_2\right] \in S^1$, then $\left[x_1,x_2\right] = \left[u_1,u_2\right] \cdot h^{-1}$ for some $\left[u_1,u_2\right] \in S^1$. Using (\ref{SO action respects evaluation}),
    \begin{align*}
        \varepsilon(h \cdot \pi)(x_1,x_2) = \left(h \cdot \varepsilon(\pi)\right)(x_1,x_2) = \varepsilon(\pi)(\left[x_1,x_2\right] \cdot h) = \varepsilon(\pi)(u_1,u_2) = 0
    \end{align*}
    where the last equality follows from the assumption on $\pi$. Consequently $h \cdot \pi$ evaluates to the zero function on $S^1$.
\end{proof}

\begin{theorem}[Invariance of $\protect\psi$]
The linear functional $\psi$ satisfies the Invariance condition.
\end{theorem}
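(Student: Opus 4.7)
The plan is to reduce, via linearity of $\psi$, to verifying $\psi(h \cdot \alpha_1^k \alpha_2^l) = \psi(\alpha_1^k \alpha_2^l)$ for each monomial $\alpha_1^k \alpha_2^l$ and each $h \in \SO$, and then to exploit Locality to shrink every monomial, modulo the ideal $\langle \alpha_1^2 + \alpha_2^2 - 1\rangle$, to one of the two shapes already handled by Lemma \ref{Lemma pre-Invariance}.

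First I would dispose of the case $k + l$ odd. By Lemma \ref{Lemma degree of action}, every term of $h \cdot \alpha_1^k \alpha_2^l$ is a monomial $\alpha_1^a \alpha_2^b$ with $a + b = k + l$ odd; at least one of $a, b$ is therefore odd, so $\psi$ annihilates every such term. Hence $\psi(h \cdot \alpha_1^k \alpha_2^l) = 0 = \psi(\alpha_1^k \alpha_2^l)$ in this parity.

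For $k + l$ even, I would reduce $\alpha_1^k \alpha_2^l$ modulo $\alpha_1^2 + \alpha_2^2 - 1$ by successively substituting $\alpha_2^2 \equiv 1 - \alpha_1^2$. When $k = 2m$ and $l = 2n$ are both even, this produces $\alpha_1^{2m}(1 - \alpha_1^2)^n$, an $\F$-linear combination of pure monomials $\alpha_1^{2r}$; when both are odd, say $k = 2m-1$ and $l = 2n+1$, it produces $\alpha_1^{2m-1}(1-\alpha_1^2)^n \alpha_2$, a linear combination of monomials of the shape $\alpha_1^{2r-1}\alpha_2$. Call the resulting polynomial $R$. Then $\alpha_1^k \alpha_2^l - R$ lies in the ideal and so evaluates to zero on $S^1$; by Lemma \ref{Locality and Invariance action}, so does $h \cdot (\alpha_1^k \alpha_2^l - R)$. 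Applying Locality (Theorem \ref{Theorem Locality psi_b}) on both sides yields $\psi(\alpha_1^k \alpha_2^l) = \psi(R)$ and $\psi(h \cdot \alpha_1^k \alpha_2^l) = \psi(h \cdot R)$, and a term-by-term invocation of Lemma \ref{Lemma pre-Invariance} gives $\psi(h \cdot R) = \psi(R)$, closing the chain.

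The main obstacle is conceptual rather than computational: Lemma \ref{Lemma pre-Invariance} covers only the two families $\alpha_1^{2m}$ and $\alpha_1^{2m-1}\alpha_2$, so an intermediate step is required to bring every even-total-degree monomial into one of these shapes without altering its $\psi$-value. Lemma \ref{Locality and Invariance action} is what makes this possible, since it permits the reduction modulo $\langle \alpha_1^2 + \alpha_2^2 - 1\rangle$ to be carried out both before and after the action of $h$.
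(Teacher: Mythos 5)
Your proposal is correct and follows essentially the same route as the paper: reduce to monomials, kill the odd-total-degree case via Lemma \ref{Lemma degree of action}, and for even total degree use the ideal membership of $\alpha_1^k\alpha_2^l - \alpha_1^{2m}(1-\alpha_1^2)^n$ (resp.\ $\alpha_1^{2m-1}(1-\alpha_1^2)^n\alpha_2$) together with Lemma \ref{Locality and Invariance action}, Locality of $\psi$, and Lemma \ref{Lemma pre-Invariance} applied term by term. The only cosmetic difference is that you treat both even-parity subcases uniformly where the paper writes out the even--even case and defers the odd--odd one as ``similar.''
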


\begin{proof}
It is sufficient to show that $\psi\left(h \cdot \alpha_1^{k}\alpha_2^{l}\right)
= \psi\left(\alpha_1^{k}\alpha_2^{l}\right)$ for any $k,l \in \N$ and $h \in \SO$. As before, the statement is obviously true for $h=-I$, so we will only show that $\psi\left(h_{u} \cdot \alpha_1^{k}\alpha_2^{l}\right) = \psi\left(\alpha_1^{k}\alpha_2^{l}\right)$. If $k+l$ is odd, then by Lemma \ref{Lemma degree of action} each term of $h_u \cdot \alpha_`^k \alpha_2^l$ has an odd degree and therefore $\psi\left(h_{u} \cdot \alpha_1^{k} \alpha_2^{l}\right) = 0 = \psi\left(\alpha_1^{k}\alpha_2^{l}\right)$.

The polynomial $\pi = \alpha_1^{2m}\alpha_2^{2n} - \alpha_1^{2m} \left(1-\alpha_1^{2}\right)^{n}$ evaluates to the zero function on $S^1$ and therefore by Lemma \ref{Locality and Invariance action}, 
\begin{eqnarray*}
\psi\left( h_{u} \cdot \alpha_1^{2m}\alpha_2^{2n}\right) &=& \psi\left(
h_{u}\cdot \alpha_1^{2m} \left(1-\alpha_1^{2}\right)^{n}\right) = \sum_{s=0}^{n} \left(-1\right)^{s} \binom{n}{s} \psi \left(h_{u} \cdot \alpha_1^{2m+2s}\right).
\end{eqnarray*}
Now by Lemma \ref{Lemma pre-Invariance}, $\psi\left(h_{u} \cdot \alpha_1^{2m+2s}\right) = \psi\left(\alpha_1^{2m+2s}\right)$ and therefore this lets us retrace the steps:
\begin{eqnarray*}
\psi\left(h_{u} \cdot \alpha_1^{2m} \alpha_2^{2n}\right) = \sum_{s=0}^{n} \left(-1\right)^{s} \binom{n}{s} \psi\left(\alpha_1^{2m+2s}\right) = \psi\left(\alpha_1^{2m} \left(1-\alpha_1^{2}\right)^{n}\right) = \psi\left(\alpha_1^{2m}\alpha_2^{2n}\right)
\end{eqnarray*}
where in the last equality we used the Locality of $\pi$ again. The case $k$ and $l$ are both odd is treated similarly. The conclusion thus follows by the linearity of $\psi$.
\end{proof}

Next, we proceed to show that $\psi$ is the only circular integral functional on $\Pol$ with respect to $S^1$.

\begin{theorem}[Existence implies uniqueness]
If $\phi$ is any circular integral functional on $\Pol$ with respect to $S^1$, then $\phi$ is uniquely determined.
\end{theorem}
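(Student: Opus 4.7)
The plan is to pin down $\phi$ on a spanning family of monomials. By Locality together with Lemma \ref{Lemma division algorithm}, every $\pi \in \Pol$ is congruent modulo $\langle \alpha_1^2 + \alpha_2^2 - 1 \rangle$ to some $\rho(\alpha_1) + \alpha_2\omega(\alpha_1)$ with $\rho,\omega\in\F[\alpha_1]$, and Locality kills the ideal, so $\phi$ is determined by the values $\{\phi(\alpha_1^k), \phi(\alpha_1^k\alpha_2) : k \in \N\}$. Applying Invariance with $h = -I \in \SO$ gives $\phi(\alpha_1^k) = (-1)^k\phi(\alpha_1^k)$ and $\phi(\alpha_1^k\alpha_2) = (-1)^{k+1}\phi(\alpha_1^k\alpha_2)$; since $\F$ has characteristic zero, it follows that $\phi(\alpha_1^k) = 0$ for odd $k$ and $\phi(\alpha_1^k\alpha_2) = 0$ for even $k$. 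With Normalization fixing $\phi(\mathbf{1}) = 1_{\F}$, the only undetermined quantities are $\gamma_m := \phi(\alpha_1^{2m})$ and $\beta_m := \phi(\alpha_1^{2m-1}\alpha_2)$ for $m \geq 1$.

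To fix these, I apply Invariance with the one-parameter family $h_u$ from Corollary \ref{Parametrization of SO} to the single monomial $\alpha_1^{2m}$. After clearing the denominator $(1+u^2)^{2m}$, this yields
\begin{equation*}
(1+u^2)^{2m}\gamma_m = \sum_{s=0}^{2m}\binom{2m}{s}(1-u^2)^s(2u)^{2m-s}\phi(\alpha_1^s\alpha_2^{2m-s})
\end{equation*}
for all $u \in \F$ with $u^2 \neq -1$, and hence as a polynomial identity in $u$ because $\F$ is infinite. Each $\phi(\alpha_1^s\alpha_2^{2m-s})$ on the right can be rewritten, using the congruence $\alpha_2^{2j}\equiv(1-\alpha_1^2)^j$ modulo $\langle\alpha_1^2+\alpha_2^2-1\rangle$ combined with Locality, as an $\F$-linear combination of $\gamma_0,\ldots,\gamma_m$ when $s$ is even and of $\beta_1,\ldots,\beta_m$ when $s$ is odd. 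The even-$s$ summands (and the left-hand side) are even polynomials in $u$, while the odd-$s$ summands are odd in $u$, so the identity splits by parity into two independent polynomial identities.

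Extracting the coefficient of the highest power $u^{4m-1}$ from the odd-parity identity (which reads $0 = \cdots$) involves only the $s = 2m-1$ term, since the degree of $u$ in the $s$-th summand is $2m+s$; this yields $-4m\beta_m = 0$, forcing $\beta_m = 0$. Extracting the coefficient of $u^{4m-2}$ from the even-parity identity involves only $s = 2m$ and $s = 2m-2$; substituting $\phi(\alpha_1^{2m-2}\alpha_2^2) = \gamma_{m-1} - \gamma_m$ and simplifying gives the recursion $2m\gamma_m = (2m-1)\gamma_{m-1}$, which with $\gamma_0 = 1_\F$ determines each $\gamma_m$ uniquely as the Wallis-type product $\Omega(m,0)1_{\F}$. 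The main obstacle is the coefficient bookkeeping and verifying that no lower-$s$ terms contaminate the $u^{4m-1}$ and $u^{4m-2}$ coefficients, but this is controlled by the above degree count. Since $\phi$ is thereby uniquely determined on a generating family of monomials, linearity forces $\phi = \psi$.
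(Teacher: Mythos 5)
Your proof is correct and follows essentially the same route as the paper: kill odd total degrees with $h=-I$, apply Invariance with $h_u$ to $\alpha_1^{2m}$, reduce all $\phi(\alpha_1^s\alpha_2^{2m-s})$ to the values $\phi(\alpha_1^{2j})$ and $\phi(\alpha_1^{2j-1}\alpha_2)$ via Locality, and extract coefficients from the resulting polynomial identity in $u$. The only cosmetic differences are that you extract the top coefficients $u^{4m-1}$ and $u^{4m-2}$ where the paper extracts the coefficients of $u$ and $u^2$ (both give $\beta_m=0$ and $2m\gamma_m=(2m-1)\gamma_{m-1}$), and you perform the reduction to the spanning family $\{\alpha_1^k,\alpha_1^k\alpha_2\}$ up front via the division algorithm rather than at the end.
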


\begin{proof}
By linearity, it suffices to show that $\phi(\alpha_1^{k} \alpha_2^{l})$ is uniquely determined for any $k,l\in \N$. Using the Invariance property with $h=-I$, we obtain $\phi(\alpha_1^{k}\alpha_2^{l}) =\phi(-I \cdot \alpha_1^{k}\alpha_2^{l}) = (-1)^{k+l}\phi(\alpha_1^{k} \alpha_2^{l})$ so $\phi(\alpha_1^{k}\alpha_2^{l}) =0$ whenever $k+l$ is odd.

For $m \ge 1$, another application of the Invariance property with $h = h_u$ from Corollary \ref{Parametrization of SO} gives
\begin{align*}
\phi\left(\alpha_1^{2m}\right) =\phi\left( h_{u} \cdot \alpha_1^{2m}\right)
% &=& \phi \left( \left( \frac{1-u^{2}}{1+u^{2}}\alpha_1 + \frac{2u}{1+u^{2}} \alpha_2 \right)^{2m}\right) \\
= \frac{1}{\left( 1+u^{2}\right)^{2m}} \sum_{s=0}^{2m} \binom{2m}{s}\left(
1-u^{2}\right)^{s} \left(2u\right)^{2m-s} \phi\left(\alpha_1^{s} \alpha_2^{2m-s}\right).
\end{align*}
We multiply both sides by $\left( 1+u^{2}\right)^{2m}$ and split the summation depending on the parity of $s$ to obtain 
\begin{align}
\left(1+u^{2}\right)^{2m} \phi\left(\alpha_1^{2m}\right) &= \sum_{s=0}^{m} \binom{2m}{2s} \left(1-u^{2}\right)^{2s} \left(2u\right)^{2m-2s} \phi\left(\alpha_1^{2s} \alpha_2^{2m-2s}\right) + \notag \\
&\quad\, \sum_{s=1}^{m} \binom{2m}{2s-1}\left(1-u^{2}\right)^{2s-1} \left(2u\right)^{2m-2s+1} \phi\left(\alpha_1^{2s-1} \alpha_2^{2m-2s+1}\right).
\label{Invariance rotation blue 0}
\end{align}

The polynomials $\pi_{1} = \alpha_1^{2s} \alpha_2^{2m-2s} - \alpha_1^{2s} \left(1-\alpha_1^{2}\right)^{m-s}$ and $\pi_{2} = \alpha_1^{2s-1}\alpha_2^{2m-2s+1} - \alpha_1^{2s-1} \left(1-\alpha_1^{2}\right)^{m-s}\alpha_2$ both evaluate to the zero function on $S^1$ so by Locality, we must have that 
\begin{align}
\phi\left(\alpha_1^{2s}\alpha_2^{2m-2s}\right) &= \phi\left(\alpha_1^{2s}\left(1-\alpha_1^{2}\right)^{m-s}\right) = \sum_{t=0}^{m-s}\left(
-1\right)^{t} \binom{m-s}{t} \phi\left(\alpha_1^{2s+2t}\right),
\label{Locality applied to pi_1} \\
\phi\left(\alpha_1^{2s-1} \alpha_2^{2m-2s+1}\right) &= \phi\left(\alpha_1^{2s-1} \left(1 - \alpha_1^{2}\right)^{m-s}\alpha_2\right)
= \sum_{t=0}^{m-s} \left(-1\right)^{t} \binom{m-s}{t} \phi\left(\alpha_1^{2s+2t-1}\alpha_2\right)  \label{Locality applied to pi_2}
\end{align}
% and
% \begin{equation}
% \phi\left(\alpha_1^{2s-1} \alpha_2^{2m-2s+1}\right) = \phi\left(\alpha_1^{2s-1} \left(1 - \alpha_1^{2}\right)^{m-s}\alpha_2\right)
% = \sum_{t=0}^{m-s} \left(-1\right)^{t} \binom{m-s}{t} \phi\left(\alpha_1^{2s+2t-1}\alpha_2\right)  \label{Locality applied to pi_2}
% \end{equation}
respectively. By (\ref{Locality applied to pi_1}) and (\ref{Locality applied to pi_2}), equation (\ref{Invariance rotation blue 0})  becomes
\begin{eqnarray*}
\left(1+u^{2}\right)^{2m}\phi\left(\alpha_1^{2m}\right) &=& \sum_{s=0}^{m} \sum_{t=0}^{m-s} \left(-1\right)^{t} \binom{2m}{2s} \binom{m-s}{t} \left(1-u^{2}\right)^{2s} \left(2u\right)^{2m-2s} \phi\left(\alpha_1^{2s+2t}\right) + \\
 &&\sum_{s=0}^{m} \sum_{t=0}^{m-s} \left(-1\right)^{t} \binom{2m}{2s-1} \binom{m-s}{t} \left(1-u^{2}\right) ^{2s-1} \left(2u\right)^{2m-2s+1} \phi\left(\alpha_1^{2s+2t-1}\alpha_2\right).
\end{eqnarray*}

Now the following polynomial of degree at most $4m$ in $\F\left[\beta\right]$, namely
\begin{align*}
\pi &= \left(1+\beta^{2}\right)^{2m} \phi\left(\alpha_1^{2m}\right) - \sum_{s=0}^{m}\sum_{t=0}^{m-s} \left(-1\right)^{t} \binom{2m}{2s} \binom{m-s}{t} \left(1-\beta ^{2}\right)^{2s} \left(2\beta\right)^{2m-2s} \phi\left(\alpha_1^{2s+2t}\right) - \\
&\quad\, \sum_{s=0}^{m}\sum_{t=0}^{m-s} \left(-1\right)^{t} \binom{2m}{2s-1} \binom{m-s}{t} \left(1-\beta^{2}\right)^{2s-1} \left(2\beta\right)^{2m-2s+1} \phi\left(\alpha_1^{2s+2t-1}\alpha_2\right)
\end{align*}
has infinitely many roots, so $\pi$ is identically zero. By extracting the coefficient of $\beta$ and $\beta^2$ respectively we get $4m\phi\left(\alpha_1^{2m-1}\alpha_2\right) = 0$ and $8m^{2}\phi\left(\alpha_1^{2m}\right) -4m \left(2m-1\right) \phi\left(\alpha_1^{2m-2}\right) = 0$.

Since $m$ is arbitrary, we must have for any $m \ge 1$, $\phi\left(\alpha_1^{2m-1} \alpha_2\right) =0$ and the first-order recurrence relation $2m \phi\left(\alpha_1^{2m}\right) = \left(2m-1\right) \phi\left(\alpha_1^{2m-2}\right)$ with the initial condition $\phi\left(\mathbf{1}\right) = 1_{\mathbb{F}}$. Thus we see that $\phi\left(\alpha_1^{2m}\right)$ and $\phi\left(\alpha_1^{2m-1}\alpha_2\right)$ are uniquely determined for all $m \ge 1$.

Finally, by utilizing the Locality condition again, both $\phi\left(\alpha_1^{2m+1}\alpha_2^{2n+1}\right)$ and $\phi\left(\alpha_1^{2m}\alpha_2^{2n}\right)$ are uniquely determined since
\begin{align*}
\phi\left(\alpha_1^{2m+1}\alpha_2^{2n+1}\right) &= \phi\left(\alpha_1^{2m+1}\left(1-\alpha_1^{2}\right)^{n}\alpha_2 \right) = \sum_{s=0}^{n}\left(-1\right)^{n} \binom{n}{s} \phi\left(\alpha_1^{2m+2s+1}\alpha_2\right), \\
\phi\left(\alpha_1^{2m}\alpha_2^{2n}\right) &= \phi\left(\alpha_1^{2m} \left(1-\alpha_1^{2}\right)^{n} \right) = \sum_{s=0}^{n} \left(-1\right)^{n} \binom{n}{s} \phi\left(\alpha_1^{2m+2s}\right).
\end{align*}
This concludes the proof.
\end{proof}

\section{Generalization to Arbitrary Circles}

Fix a point $\left[a,b\right] \in \A$ and a non-zero $r\in \F$. We define $S^1_{r,\left[a,b\right]}$ to be the
collection of points $\left[x,y\right] \in \A$ such that $\left(x-a\right)^2 + \left(y-b\right)^2 = r^2$. A linear functional $\phi _{r,\left[a,b\right]} \colon \Pol \rightarrow \F$ is a circular integral functional on $\Pol$ with respect to $S^1_{r,\left[a,b\right]}$ if the following conditions are satisfied:

\begin{description}
    \item[(Normalization)] For the multiplicative identity $\mathbf{1}$ of $\Pol$, we have $\phi _{r,\left[a,b\right]}(\mathbf{1}) = r$.

    \item[(Locality)] If $\pi \in \Pol$ such that $\varepsilon(\pi) = 0$ on $S^1_{r,\left[a,b\right]}$, we have $\phi _{r,\left[a,b\right]} \left(\pi\right) = 0$.

    \item[(Invariance)] For any $\pi \in \Pol$ and $h \in \SO$, $\phi _{r,\left[a,b\right]} \left(h \cdot \pi \right) = \phi _{r,\left[a,b\right]}\left(\pi\right)$. 
\end{description}

By employing the same analysis, the existence and uniqueness of a circular integral functional on $\Pol$ with respect to $S^1_{r,\left[a,b\right]}$ can be derived from that of $\psi$.

\begin{theorem} \label{Existence and uniqueness of CIF general circle}
There is one and only one circular integral functional on $\Pol$ with respect to $S^1_{r,\left[a,b\right]}$, given by
\begin{align*}
\psi _{r,\left[a,b\right]}\left(\alpha_1^{k}\alpha_2^{l}\right) =  r\psi\left(\left(a+r\alpha_1\right)^k\left(b+r\alpha_2\right)^l\right)
\end{align*}
where $\psi$ is the circular integral functional on $\Pol$ with respect to $S^1$.
\end{theorem}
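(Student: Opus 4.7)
The plan is to transfer the analysis of Section 3 from $S^1$ to $S^1_{r,[a,b]}$ via an affine change of variables. I would introduce the $\F$-algebra homomorphism $\Phi : \Pol \to \Pol$ defined by $\Phi(\pi)(\alpha_1,\alpha_2) := \pi(a+r\alpha_1, b+r\alpha_2)$; the corresponding map on points $[x_1,x_2] \mapsto [a+rx_1, b+rx_2]$ is a bijection between $S^1$ and $S^1_{r,[a,b]}$. Consequently $\pi$ evaluates to zero on $S^1_{r,[a,b]}$ if and only if $\Phi(\pi)$ evaluates to zero on $S^1$, and the proposed formula can be rewritten compactly as $\psi_{r,[a,b]} = r \cdot (\psi \circ \Phi)$.

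For existence, I would verify the three conditions in turn. Normalization is immediate because $\Phi(\mathbf{1}) = \mathbf{1}$, giving $\psi_{r,[a,b]}(\mathbf{1}) = r \, \psi(\mathbf{1}) = r$. Locality follows at once from the bijection $S^1 \leftrightarrow S^1_{r,[a,b]}$ together with the Locality of $\psi$ established in Section 3. Invariance is the genuinely delicate step: I would compute $\Phi(h \cdot \pi)$, compare it with $h \cdot \Phi(\pi)$, and invoke the Invariance of $\psi$, absorbing the discrepancy coming from the shifted center $[a,b]$ into the Locality of $\psi_{r,[a,b]}$ that has just been verified.

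For uniqueness, I would retrace the argument of the Existence-implies-uniqueness theorem of Section 3 in the translated setting. The division-algorithm lemma has an immediate analog: any polynomial vanishing on $S^1_{r,[a,b]}$ lies in $\langle (\alpha_1-a)^2 + (\alpha_2-b)^2 - r^2 \rangle$, proved by running the same multivariate division with this new generator. The projection and pre-Invariance lemmas transfer via $\Phi$ with no substantive changes. The Invariance applied to $h_u$, combined with Locality, then produces a polynomial identity in an auxiliary variable $\beta \in \F$ with infinitely many roots, whose $\beta$- and $\beta^2$-coefficients yield a first-order recurrence uniquely determining each $\phi(\alpha_1^{2m})$ from the initial value $\phi(\mathbf{1}) = r$; Locality then closes the induction for all remaining monomials $\alpha_1^k \alpha_2^l$. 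The main obstacle is performing the Invariance check for existence cleanly, since $\Phi$ does not commute with the $\SO$-action on the nose; once that step is settled, the remaining work is organized bookkeeping entirely parallel to Section 3.
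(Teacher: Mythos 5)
Your overall strategy---transporting everything through the substitution homomorphism $\Phi$---is the natural one, and since the paper gives no proof beyond the remark that the result follows ``by the same analysis,'' your outline is already more explicit than the published argument. But the step you yourself single out as the main obstacle, the Invariance check for existence, does not go through as you describe, and Locality cannot absorb the discrepancy. Computing directly, $\Phi(h\cdot\pi)$ is $h$ applied to the substitution centred at the \emph{rotated} point $[a,b]\cdot h$, not at $[a,b]$; after applying $\psi$ and its Invariance you land on the circular functional attached to the circle centred at $[a,b]\cdot h$, which is a genuinely different functional. Concretely, the formula in the theorem gives $\psi_{r,[a,b]}(\alpha_1)=r\,\psi(a+r\alpha_1)=ra$, whereas Invariance under $h=-I$ forces any circular integral functional to satisfy $\phi(\alpha_1)=-\phi(\alpha_1)=0$; and the difference $h\cdot\alpha_1-\alpha_1$ has degree one, so it can never lie in $\bigl\langle(\alpha_1-a)^2+(\alpha_2-b)^2-r^2\bigr\rangle$ and Locality is powerless to repair this. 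Pushing the Section 3 uniqueness argument a little further shows the three conditions, read literally with the origin-centred $\SO$-action, are jointly unsatisfiable for a generic $[a,b]\neq[0,0]$. So as stated (and as you propose to prove it) the claim is false off-centre; the gap in your sketch is real, not bookkeeping.

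The repair is to read Invariance for $S^1_{r,[a,b]}$ with respect to rotations about the centre, i.e.\ to replace the action of $h$ by its conjugate $h\ast\pi:=\Phi\bigl(h\cdot\Phi^{-1}(\pi)\bigr)$ under the affine map $[x,y]\mapsto[a+rx,\,b+ry]$. With that reading $\Phi$ intertwines the two actions exactly, $\Phi(h\cdot\sigma)=h\ast\Phi(\sigma)$, and your existence verification collapses to three lines with no discrepancy to absorb. Uniqueness then also transfers without re-running the division-algorithm and recurrence machinery in translated coordinates: since $r\neq 0$, $\Phi$ is an algebra automorphism of $\Pol$, so if $\phi$ is any circular integral functional for $S^1_{r,[a,b]}$ then $\tfrac{1}{r}\,\phi\circ\Phi$ is one for $S^1$ (Normalization, Locality and conjugated Invariance all pull back), hence equals $\psi$ by the uniqueness theorem of Section 3, giving $\phi=r\,\psi\circ\Phi$ directly. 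Note finally that the application to the super Catalan numbers uses only $[a,b]=[0,0]$, where the two readings of Invariance coincide and your argument is complete as written.
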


Now we are finally able to give an algebraic interpretation of the super Catalan numbers $S(m,n)$.

\begin{theorem}[An algebraic interpretation of $S(m,n)$]
Over $\Q$, for any $m, n \in \N$, we have that $2S(m,n) = \psi_{2,\left[0,0\right]}\left(\alpha_1^{2m} \alpha_2^{2n}\right)$.
\end{theorem}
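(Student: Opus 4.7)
The plan is to apply Theorem \ref{Existence and uniqueness of CIF general circle} directly with $r = 2$ and $[a,b] = [0,0]$, and then unwind the definition of $\Omega(m,n)$. This reduces the statement to a one-line computation, so the task is really just to check that the formula for $\psi_{r,[a,b]}$ specializes correctly.

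Concretely, I would write
\begin{align*}
\psi_{2,[0,0]}\left(\alpha_1^{2m}\alpha_2^{2n}\right) = 2\,\psi\!\left((2\alpha_1)^{2m}(2\alpha_2)^{2n}\right) = 2 \cdot 4^{m+n}\, \psi\!\left(\alpha_1^{2m}\alpha_2^{2n}\right),
\end{align*}
where the first equality is Theorem \ref{Existence and uniqueness of CIF general circle} and the second is linearity of $\psi$. By Theorem \ref{Existence and Uniqueness Theorem Intro}, $\psi(\alpha_1^{2m}\alpha_2^{2n}) = \Omega(m,n)1_\Q$, so the right-hand side equals $2 \cdot 4^{m+n}\,\Omega(m,n) = 2S(m,n)$, since $\Omega(m,n) = S(m,n)/4^{m+n}$ by definition.

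There is no real obstacle here: the content of the theorem is that once the general-circle formula has been proved, the specialization to the circle of radius $2$ centred at the origin recovers exactly $2S(m,n)$ on the even monomials, with the factor $4^{m+n}$ in $\Omega$ being cancelled by the rescaling $\alpha_i \mapsto 2\alpha_i$ and the leading $r = 2$ providing the outer factor of $2$. The only thing to verify is the arithmetic of the powers of $2$, which matches exactly.
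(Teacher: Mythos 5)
Your proposal is correct and is essentially identical to the paper's own proof: both apply Theorem \ref{Existence and uniqueness of CIF general circle} with $r=2$, $[a,b]=[0,0]$, pull out the factor $4^{m+n}$ by linearity, and cancel it against the $4^{m+n}$ in $\Omega(m,n)$. The arithmetic checks out exactly as you describe.
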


\begin{proof}
It follows immediately from Theorem \ref{Existence and uniqueness of CIF general circle}. We have that
\begin{equation*}
    \psi_{2,\left[0,0\right]}\left(\alpha_1^{2m} \alpha_2^{2n}\right) = 2\psi\left(\left(2\alpha_1\right)^{2m}\left(2\alpha_2\right)^{2n}\right) = \frac{2^{2m+2n+1}}{4^{m+n}} S\left(m,n\right)1_{\Q} = 2S\left(m,n\right)
\end{equation*}%
as desired.
\end{proof}

\section*{Acknowledgement}
The author thanks Hopein Tang for the helpful discussions.

\end{document}